\newtheorem{thm}{Theorem}
\newtheorem{lemma}[thm]{Lemma}
\newdefinition{rmk}{Remark}
\newproof{proof}{Proof}
\journal{---}
\begin{document}

\begin{frontmatter}


 \title{Zeroth-order general Randi\' c index of $k$-generalized quasi trees}
\author{Muhammad Kamran Jamil\fnref{label2}}
\ead{m.kamran.sms@gmail.com}
\fntext[label2]{Corresponding author}
\address{Department of Mathematics, Riphah Institute of Computing and Applied Sciences (RICAS),\\
Riphah International University, 14 Ali Road, Lahore, Pakistan.}

\author{Ioan Tomescu}
\address{Faculty of Mathematics and Computer Science, University of Bucharest, Romania.}
\ead{ioan@fmi.unibuc.ro}


\address{}

\begin{abstract}
For a simple graph $G(V,E)$, the zeroth-order general Randi\' c index is defined as $^0R_{\alpha}(G)=\sum_{v\in V(G)}d(v)^{\alpha}$, where $d(v)$ is the degree of the vertex $v$ and $\alpha\ne0$ is a real number. The $k$-generalized quasi-tree is a connected graph $G$ with a subset $V_k\subset V(G)$, where $|V_k|=k$ such that $G-V_k$ is a tree,  but for any subset $V_{k-1}\subset V(G)$ with cardinality $k-1$, $G-V_{k-1}$ is not a tree. In this paper, we characterize the extremal $k$-generalized quasi trees with the minimum and maximum values of the zeroth-order general Randi\' c index for $\alpha\neq 0$.
\end{abstract}

\begin{keyword}
$k$-generalized quasi tree, zeroth-order general Randi\' c index, extremal graphs.



\end{keyword}

\end{frontmatter}


\section{Introduction}

Let $G=(V(G),E(G))$ be a simple connected graph, where $V(G)$ and $E(G)$ represent the sets of vertices and edges, respectively. The Randi\' c index \cite{r} introduced in 1975, is defined as follows:
\[R(G)=\sum_{uv\in E(G)}(d(u)d(v))^{-1/2},\]
where $d(v)$ is the degree of the vertex $v$ in $G$.\\

Li et al. proposed the general Randi\' c index by replacing the  exponent $-1/2$ by an arbitrary real number $\alpha$. This index is defined as
\[R_{\alpha}(G)=\sum_{uv\in E(G)}(d(u)d(v))^{\alpha}.\]

The zeroth-order Randi\' c index, defined by Kier et al. \cite{kh}, is
\[^0R_{-1/2}(G)=\sum_{v\in V(G)}{d(v)}^{-1/2}.\]

The first Zagreb index was introduced by Gutman et al. in 1972 \cite{grt}  and it is defined as
\[^0R_{2}(G)=\sum_{v\in V(G)}{d(v)}^{2}.\]

The common generalization of the first Zagreb index and the zeroth-order Randi\' c index was made by Li et al. \cite{lz1}. He proposed the zeroth-order general Randi\' c index $^0R_{\alpha}$ by
\[^0R_{\alpha}(G)=\sum_{v\in V(G)}{d(v)}^{\alpha}.\]

The above mentioned topological indices have been closely correlated with many physical and chemical properties of the molecules such as boiling point, calculated surface, molecular complexity, heterosystems, chirality, e.g. More information on these indices can be obtained from \cite{gja,hly,hly1,kh1,kh2,pl}.\\

A graph $G$ is called a quasi-tree, if there exists a vertex $z\in V(G)$ such that $G-z$ is a tree and such a vertex is called a quasi vertex. As deletion of any vertex with degree one will deduce another tree it follows that any tree is a quasi tree. A graph $G$ is called $k$-generalized  quasi tree if there exists a subset $V_k\subset V(G)$ with cardinality $k$ such that $G-V_k$ is a tree but for any subset $V_{k-1}\subset V(G)$ with cardinality $k-1$, $G-V_{k-1}$ is not a tree. The vertices of $V_k$ are also called quasi vertices (or $k$-quasi vertices). To draw a $k$-generalized quasi tree we need at least $k+2$ vertices. We call any tree a trivial quasi tree and other quasi trees are called non-trivial quasi trees. We denote the class of $k$-generalized quasi trees of order $n$ by $T_k(n)$.\\

All graphs considered in this paper are undirected, finite, simple and connected. For terminology and notation not defined here we refer \cite{bm}. Let $G$ and $H$ be two vertex disjoint graphs. $G+H$ denotes the join graph of $G$ and $H$ with vertex set $V(G+H)=V(G)\cup V(H)$ and the edge set $E(G+H)=E(G)\cup E(H)\cup \{uv|v\in V(G),v\in V(H)\}$.   $ S_n$ and $P_n$ represent the star and the path of order $ n $, respectively. $S_{p,q}(u,v)$ denotes the bistar of order $p+q$, which is a tree consisting of  two adjacent vertices $u$ and $v$, such that $u$ is adjacent to $p-1$ pendant vertices and $q$ is adjacent to $q-1$ pendant vertices. If $G$ and $H$ are vertex disjoint graphs and $u,v\in V(H)$, $G\bullet _{u,v}H$ represents the graph having vertex set $V(G)\cup V(H)$ obtained by joining every vertex of $G$  to vertices $u$ and $v$ of $H$.\\

Akhter et al. \cite{ajt} found the extremal first and second Zagreb indices of $k$-generalized quasi trees. Qiao \cite{q} determined the extremal $k$-generalized quasi trees, for $k=1$, with the minimum and maximum values of the zeroth-order general Randi\' c index.  In this paper, we characterize the extremal $k$-generalized quasi trees of order $n$ with the maximum and minimum values of the zeroth-order general Randi\' c index for $\alpha\neq 0$. Our results extend the results of Akhter and Qiao.

\section{Results and Discussion}
In this section, first we will discuss some auxiliary lemmas which will be helpful to prove main results. 

\begin{lemma}{\rm \cite{lz}}\label{z}
	Among all trees with $n$ vertices, the trees with extremal zeroth-order general Randi\' c index are listed in the following table:
	\begin{table}[h!]
		\begin{tabular}{c|c|c}
			& $\alpha <0$ or $\alpha >1$ &$0< \alpha <1$\\
			\hline
			minimum & the path $P_n$ & the star $S_n$\\
			\hline
			second minimum&trees with $[3,2^{n-4},1^3]$&the double star $S_{n-2,2}$\\
			\hline
			third minimum & trees with $[3^2,2^{n-6},1^4]$& the double star $S_{n-3,3}$\\
			\hline
			maximum& the star $S_n$& the path $P_n$\\
			\hline
			second maximum&the double star $S_{n-2,2}$& trees with $[3,2^{n-4},1^3]$ \\
			\hline
			third maximum& the double star $S_{n-3,3}$& trees with $[3^2,2^{n-6},1^4]$\\
			\hline
		\end{tabular}
	\end{table}

\end{lemma}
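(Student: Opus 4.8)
Since $^0R_\alpha(G)=\sum_{v}d(v)^\alpha$ depends only on the degree sequence, and since a nonincreasing sequence of positive integers $d_1\ge\cdots\ge d_n$ is the degree sequence of \emph{some} tree on $n$ vertices if and only if $\sum_i d_i=2(n-1)$ (induction: peel off a leaf, reattach), the whole statement is an optimization of the functional $F(d_1,\dots,d_n)=\sum_i d_i^\alpha$ over partitions of $2n-2$ into exactly $n$ positive parts. I would organize the proof around the elementary fact that $x\mapsto x^\alpha$ is strictly convex for $\alpha<0$ or $\alpha>1$ and strictly concave for $0<\alpha<1$, so $F$ is strictly Schur-convex (resp. Schur-concave), hence strictly monotone along the majorization order whenever two sequences are comparable. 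It suffices to treat $\alpha\notin[0,1]$; the case $0<\alpha<1$ follows by reversing every inequality, which interchanges "minimum" with "maximum" everywhere and reproduces the second column of the table from the first.

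The engine is a \emph{unit-transfer} move on degree sequences: if $d_i\ge d_j\ge 2$, replace $(d_i,d_j)$ by $(d_i+1,d_j-1)$. This keeps all parts $\ge 1$ and the sum fixed, so the new sequence is again a legal tree degree sequence, and by strict convexity it strictly increases $F$ (because $f(d_i+1)-f(d_i)>f(d_j)-f(d_j-1)$ when $d_i\ge d_j-1$). Iterating the move toward the largest part drives any sequence to $[n-1,1^{n-1}]$, so the star $S_n$ is the unique maximizer of $F$; running it in reverse, i.e.\ moving a unit from any vertex of degree $\ge 3$ to a leaf, drives any sequence to $[2^{n-2},1^2]$, so the path $P_n$ is the unique minimizer. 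This settles the extreme rows of the table.

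For the second and third rows I would argue locally. On the minimum side: a non-path tree has maximum degree $\ge 3$; if it has exactly one vertex of degree $3$ and no larger degree, the sum constraint forces the degree sequence $[3,2^{n-4},1^3]$, while every other non-path sequence either has two vertices of degree $3$ (and then majorizes $[3^2,2^{n-6},1^4]$) or a vertex of degree $\ge 4$ (and then majorizes $[4,2^{n-5},1^4]$), once excess units are transferred down. It then remains to order the three candidates, which is pure Jensen at a midpoint: $2\cdot 3^\alpha<2^\alpha+4^\alpha$ and $3^\alpha+1<2\cdot 2^\alpha$ give
\[
F\big([3,2^{n-4},1^3]\big)<F\big([3^2,2^{n-6},1^4]\big)<F\big([4,2^{n-5},1^4]\big),
\]
identifying the second and third minima. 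The maximum side is exactly dual: near the star the relevant competitors are $[n-2,2,1^{n-2}]=S_{n-2,2}$, $[n-3,3,1^{n-2}]=S_{n-3,3}$, and $[n-3,2,2,1^{n-3}]$, and the same convexity inequalities (now with $(n-2,2)$ more spread than $(n-3,3)$, and $3^\alpha+1<2\cdot 2^\alpha$) place $S_{n-2,2}$ and $S_{n-3,3}$ as the second and third maxima. Small values of $n$, where some of these sequences degenerate or coincide, are checked directly.

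The step I expect to cost the most care is exactly this second/third-extremal case analysis: one must verify that \emph{every} tree degree sequence other than the three named ones is majorized by (hence has $F$-value at least that of) one of the candidates $[3^2,2^{n-6},1^4]$ or $[4,2^{n-5},1^4]$ on the minimum side, and dually on the maximum side — ruling out intruders such as $[4,3,2^{n-7},1^5]$, $[3^3,2^{n-8},1^5]$, $[n-2,3,1^{n-3}]$, and so on. Each individual comparison is again a one-line instance of strict convexity, but making the reduction exhaustive is the bookkeeping-heavy part; once it is in place, nothing beyond Jensen's inequality is needed.
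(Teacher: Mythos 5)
The paper itself gives no proof of this lemma: it is quoted from Li and Zhao \cite{lz} and used as a black box, so your argument is an independent, self-contained derivation rather than an alternative to an in-paper proof. Your route — reduce everything to optimizing $\sum_i d_i^{\alpha}$ over tree degree sequences (partitions of $2n-2$ into $n$ positive parts), then exploit strict convexity/concavity of $x^{\alpha}$ through unit-transfer moves and Schur monotonicity — is sound, and it is in fact the same engine the paper deploys elsewhere (Lemmas \ref{f}, \ref{trans} and \ref{aq} are exactly your transfer move and equalization/concentration principle), so the two fit together naturally; what your write-up adds is the second/third-extremal analysis that the paper simply imports. Your worry about the exhaustiveness of that case analysis can be discharged more cleanly than you suggest: the majorization claims are literally true, not just true ``after transferring excess units down.'' Indeed, every tree sequence with maximum degree at least $4$ majorizes $[4,2^{n-5},1^4]$, and every one with maximum degree $3$ and at least two vertices of degree $3$ majorizes $[3^2,2^{n-6},1^4]$ (compare partial sums, using that such trees have at least four leaves); dually, every non-star tree sequence is majorized by $[n-2,2,1^{n-2}]$, and every tree with maximum degree at most $n-3$ is majorized by $[n-3,3,1^{n-2}]$, so on the maximum side the competitor $[n-3,2,2,1^{n-3}]$ needs no separate treatment at all. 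Strict Schur-convexity then gives strict inequalities and the uniqueness assertions.

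One slip to fix: for $\alpha<0$ or $\alpha>1$ strict convexity gives $3^{\alpha}+1>2\cdot 2^{\alpha}$, not $3^{\alpha}+1<2\cdot 2^{\alpha}$ as you wrote (the latter is the $0<\alpha<1$ direction). The conclusions you actually draw — $F([3,2^{n-4},1^3])<F([3^2,2^{n-6},1^4])$ on the minimum side and $F([n-3,2,2,1^{n-3}])<F(S_{n-3,3})$ on the maximum side — are the ones that follow from the corrected inequality, so this is evidently a typo, but as written it contradicts your own displayed chain and should be reversed. Also state the thresholds on $n$ explicitly (the second and third rows require roughly $n\ge 5$, $n\ge 6$, resp.\ $n\ge 7$, for the listed sequences to exist and be distinct), which you rightly relegate to direct small-case checks.
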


\begin{lemma}\label{max}
	If $u,v\in V(G)$ such that $uv\notin E(G)$, then for\\
	$\alpha<0$
		\[^0R_{\alpha}(G+uv)<{^0R}_{\alpha}(G), \]	
	and for $ \alpha>0$	
	\[^0R_{\alpha}(G+uv)>{^0R}_{\alpha}(G).\]
\end{lemma}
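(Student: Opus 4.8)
The plan is to note that adding a single edge $uv$ to $G$ changes the degree of exactly two vertices, namely $u$ and $v$, each of which increases by one, while every other vertex keeps its degree. Writing $d(u),d(v)$ for the degrees in $G$, I would therefore compute the difference directly:
\[
{^0R}_{\alpha}(G+uv) - {^0R}_{\alpha}(G) = \bigl[(d(u)+1)^{\alpha} - d(u)^{\alpha}\bigr] + \bigl[(d(v)+1)^{\alpha} - d(v)^{\alpha}\bigr].
\]

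Next I would invoke the monotonicity of the function $f(x)=x^{\alpha}$ on $(0,\infty)$: it is strictly increasing for $\alpha>0$ and strictly decreasing for $\alpha<0$. Since $G$ is connected and contains two non-adjacent vertices, it has at least three vertices, so $d(u)\ge 1$ and $d(v)\ge 1$; hence each bracketed term $(d(w)+1)^{\alpha}-d(w)^{\alpha}$ is well defined and carries the sign of $\alpha$ — positive when $\alpha>0$ and negative when $\alpha<0$. Adding the two terms yields the strict inequality ${^0R}_{\alpha}(G+uv)>{^0R}_{\alpha}(G)$ for $\alpha>0$ and ${^0R}_{\alpha}(G+uv)<{^0R}_{\alpha}(G)$ for $\alpha<0$, as claimed.

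There is no real obstacle here; the only point worth a line of care is checking that the degrees entering the computation are positive, so that $x^{\alpha}$ is defined and the strict monotonicity genuinely applies, which is guaranteed by connectedness. An equivalent route is to apply the mean value theorem, writing $(d+1)^{\alpha}-d^{\alpha}=\alpha\,\xi^{\alpha-1}$ for some $\xi\in(d,d+1)$ and reading off the sign from that of $\alpha$, but the direct appeal to monotonicity of $x^{\alpha}$ is the cleanest presentation and I would use that.
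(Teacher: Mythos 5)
Your proof is correct: the paper states this lemma without proof, treating it as immediate, and your argument (the difference equals $\bigl[(d(u)+1)^{\alpha}-d(u)^{\alpha}\bigr]+\bigl[(d(v)+1)^{\alpha}-d(v)^{\alpha}\bigr]$, each term carrying the sign of $\alpha$ by strict monotonicity of $x^{\alpha}$ on $(0,\infty)$) is precisely the reasoning being taken for granted. The remark that connectedness guarantees $d(u),d(v)\ge 1$, so $x^{\alpha}$ is defined and strictly monotone on the relevant range, is the right point of care and nothing further is needed.
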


\begin{lemma}\label{deg}
	Let $G\in T_k(n)$. If $^0R_{\alpha}(G)$ is minimum $($maximum$)$ and $z$ is a quasi vertex of G, then $d(z)=n-1$ for  $\alpha<0$ $(\alpha>0, respectively)$.
\end{lemma}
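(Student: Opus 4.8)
The plan is to argue by contradiction. Suppose $G\in T_k(n)$ has minimum (resp.\ maximum) value of $^0R_\alpha$ among $T_k(n)$ for $\alpha<0$ (resp.\ $\alpha>0$), but some quasi vertex $z\in V_k$ has $d(z)\le n-2$. Then there exists a vertex $u\in V(G)$ with $u\ne z$ and $uz\notin E(G)$. I would form $G'=G+uz$, the graph obtained by adding the edge $uz$. By Lemma~\ref{max}, $^0R_\alpha(G')<{}^0R_\alpha(G)$ when $\alpha<0$ and $^0R_\alpha(G')>{}^0R_\alpha(G)$ when $\alpha>0$; so to reach a contradiction it suffices to show $G'\in T_k(n)$, i.e.\ that adding the edge $uz$ through a quasi vertex neither destroys the $k$-generalized quasi-tree property nor lets us drop below $k$ deletions.

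First I would check that $G'-V_k$ is still a tree: since $z\in V_k$, deleting $V_k$ from $G'$ removes the vertex $z$ and hence the new edge $uz$ as well, so $G'-V_k=G-V_k$, which is a tree by hypothesis. This shows $G'$ can be turned into a tree by deleting the $k$-set $V_k$. The substantive part — and the step I expect to be the main obstacle — is showing the minimality clause: for \emph{every} $(k-1)$-subset $W\subset V(G')$, the graph $G'-W$ is not a tree. Here one cannot simply quote that $G-W$ is not a tree, because $G'-W$ has one extra edge and could conceivably become a tree (if $G-W$ were a forest with exactly two components joined by the edge $uz$, or a connected unicyclic graph whose unique cycle is broken... no, adding an edge cannot break a cycle). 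Adding an edge can only (i) keep the cyclomatic number the same while reducing the number of components by one, or (ii) increase the cyclomatic number by one. So $G'-W$ is a tree only if $G-W$ was a forest with exactly two components and $uz$ joins them. I would rule this out by a case analysis on whether $u$ and $z$ lie in $W$.

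Concretely: if $z\in W$, then the edge $uz$ is absent from $G'-W$, so $G'-W=G-W$, which is not a tree. If $z\notin W$, I use the hypothesis that $G-W$ is not a tree together with a counting argument: $G-W$ has $n-(k-1)$ vertices; were $G'-W$ a tree it would have exactly $n-k$ edges, forcing $G-W$ to have $n-k-1$ edges, so $G-W$ would be a forest with exactly two components, one of which contains $u$ and the other $z$. But then, crucially, $z$ together with the $k-1$ vertices of $W$ forms a $k$-set whose deletion... one needs instead to observe that $G-(W\cup\{z\})$ would have to be a forest, and to derive a contradiction with the fact that no $(k-1)$-set deletion yields a tree, by exhibiting a genuine $(k-1)$-set that does. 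The cleanest route is probably: since $G-V_k$ is a tree and $z\in V_k$, the set $V_k\setminus\{z\}$ has size $k-1$ and $G-(V_k\setminus\{z\})$ is $G-V_k$ with $z$ and its incident edges added back; this is connected (as $z$ attaches to the tree $G-V_k$) and has cyclomatic number equal to $d_{G-(V_k\setminus\{z\})}(z)-1\ge 0$, which is zero — hence a tree — precisely when $z$ has exactly one neighbour outside $V_k$. Thus in the extremal graph each quasi vertex must have at least two neighbours outside $V_k$; combined with the forest-with-two-components structure forced above, I would extract the contradiction, completing the proof that $G'\in T_k(n)$ and hence that $G$ was not extremal unless $d(z)=n-1$.
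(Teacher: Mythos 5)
Your overall strategy --- add an edge $uz$ at a quasi vertex of degree at most $n-2$, invoke Lemma~\ref{max}, and verify that $G+uz$ remains in $T_k(n)$ --- is exactly the paper's route (the paper in fact merely asserts the membership $G+uz\in T_k(n)$), and you correctly single out this membership claim as the only nontrivial point. However, your verification of it contains a genuine gap. In the decisive step you assert that $G-(V_k\setminus\{z\})$ is connected ``as $z$ attaches to the tree $G-V_k$'' and conclude that every quasi vertex has at least two neighbours outside $V_k$. This is unjustified: all neighbours of $z$ may lie inside $V_k$, in which case $G-(V_k\setminus\{z\})$ is disconnected, hence not a tree, and the defining property of $T_k(n)$ yields no contradiction; the correct conclusion of your computation is only that $z$ has zero or at least two neighbours in $G-V_k$. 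The zero case really occurs, and it kills the membership claim in general: take the path $a-b-c-d$, join $z_2$ to $a,b,c,d$ and join $z_1$ only to $z_2$. One checks that this graph $G$ lies in $T_2(6)$ with $V_2=\{z_1,z_2\}$, yet $(G+z_1a)-z_2$ is the path $z_1-a-b-c-d$, a tree, so $G+z_1a\notin T_2(6)$. Hence the statement you are trying to prove --- that adding an edge at a quasi vertex never leaves $T_k(n)$ --- is false for a general $G\in T_k(n)$; since your argument nowhere uses the extremality of $G$ in this part, it cannot be completed as outlined.

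The final step is also only announced (``I would extract the contradiction''), not carried out, and in fact the two ingredients you propose to combine are compatible, so no contradiction is available from them alone: let $V(G)=\{z_1,z_2,w,p,q,r,s\}$ with $z_1$ adjacent to $p,q$, $z_2$ adjacent to $r,s$, and $w$ adjacent to $p,q,r,s$. Then $G\in T_2(7)$ with $V_2=\{z_1,z_2\}$, the quasi vertex $z_1$ has two neighbours outside $V_2$, $z_1r\notin E(G)$, and still $G-w$ is a forest with exactly two components separating $z_1$ from $r$, so that $(G+z_1r)-w$ is a tree and $G+z_1r\notin T_2(7)$. A correct completion must therefore genuinely exploit the extremality of $G$ (for example, by showing that a graph realizing such a configuration can be modified within $T_k(n)$ so as to strictly improve ${}^0R_{\alpha}$), or must choose the added edge more carefully rather than an arbitrary non-neighbour of $z$. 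As it stands, your proposal --- like the paper's own one-line assertion --- leaves the crucial claim $G+uz\in T_k(n)$ unestablished, and the auxiliary degree claim you interpose is false without further hypotheses.
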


\begin{proof}
	Let $G\in T_k(n)$, $^0R_{\alpha}(G)$ be minimum (maximum) and $z$ be a quasi vertex of $G$. Suppose on contrary $d(z)<n-1$, then there is a vertex $x\in V(G)$ such that $xz\notin E(G)$. Now $G+xz$ is also in $T_k(n)$ and $^0R_{\alpha}(G+xz)<^0R_{\alpha}(G)$ for $\alpha <0$ ($^0R_{\alpha}(G+xz)>^0R_{\alpha}(G)$ for $\alpha >0$), a contradiction, hence $d(z)=n-1$.
\end{proof}

\begin{lemma}\label{f}
Let $f(x)=x^{\alpha}-(x+1)^{\alpha}$, where $x>0$. $f(x)$ is strictly increasing for $0<\alpha <1$ and strictly decreasing for $\alpha<0$ or $\alpha >1$.
\end{lemma}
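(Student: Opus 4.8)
The plan is to establish the monotonicity by examining the sign of the derivative $f'(x)$ on $(0,\infty)$. Since $\alpha$ is a fixed nonzero real and $x>0$, the map $x\mapsto x^{\alpha}$ is differentiable, and
\[
f'(x)=\alpha x^{\alpha-1}-\alpha (x+1)^{\alpha-1}=\alpha\bigl(x^{\alpha-1}-(x+1)^{\alpha-1}\bigr).
\]
The whole argument then reduces to comparing $x^{\alpha-1}$ with $(x+1)^{\alpha-1}$, which is controlled by the monotonicity of the power function $t\mapsto t^{\alpha-1}$ on $(0,\infty)$, combined with the sign of the leading factor $\alpha$.

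First I would treat $0<\alpha<1$: here $\alpha-1<0$, so $t\mapsto t^{\alpha-1}$ is strictly decreasing, and from $0<x<x+1$ we get $x^{\alpha-1}>(x+1)^{\alpha-1}$, hence the bracket is positive; since $\alpha>0$ this gives $f'(x)>0$ for all $x>0$, so $f$ is strictly increasing. Next, for $\alpha>1$ we have $\alpha-1>0$, so $t\mapsto t^{\alpha-1}$ is strictly increasing and $x^{\alpha-1}<(x+1)^{\alpha-1}$, making the bracket negative; multiplying by $\alpha>0$ yields $f'(x)<0$, so $f$ is strictly decreasing. Finally, for $\alpha<0$ we again have $\alpha-1<0$, so as in the first case the bracket $x^{\alpha-1}-(x+1)^{\alpha-1}$ is positive, but now the leading factor $\alpha$ is negative, so $f'(x)<0$ and $f$ is strictly decreasing. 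This exhausts all admissible $\alpha$.

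There is no genuine obstacle; the only point requiring care is keeping track of the two independent sign conditions — the sign of $\alpha-1$, which fixes the direction of the inequality between $x^{\alpha-1}$ and $(x+1)^{\alpha-1}$, and the sign of $\alpha$ itself, which multiplies that difference — noting that for $\alpha<0$ both effects cooperate while for $\alpha>1$ only the first is relevant. A calculus-free alternative would invoke the mean value theorem to write $f(x)=x^{\alpha}-(x+1)^{\alpha}=-\alpha\,\xi(x)^{\alpha-1}$ for some $\xi(x)\in(x,x+1)$, but since $\xi(x)$ is not explicit this is messier than the direct derivative computation, so I would proceed as above.
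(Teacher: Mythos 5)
Your proof is correct: differentiating gives $f'(x)=\alpha\bigl(x^{\alpha-1}-(x+1)^{\alpha-1}\bigr)$, and your case analysis on the signs of $\alpha$ and $\alpha-1$ correctly yields $f'(x)>0$ on $(0,\infty)$ for $0<\alpha<1$ and $f'(x)<0$ for $\alpha<0$ or $\alpha>1$. The paper states this lemma without proof, and your derivative argument is exactly the standard verification one would supply, so nothing further is needed.
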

\begin{lemma}\label{trans}
	Let $G$ be a graph, and $u,v$ and $w$ be three vertices of $G$ such that $uw\notin E(G)$, $vw\in E(G)$ and $d(u)\geq d(v)$.
Let $G'=G+uw-vw$. If $\alpha<0$ or $\alpha >1$ then $^0R_{\alpha}(G')>^0R_{\alpha}(G)$ and if $0<\alpha <1$ then  $^0R_{\alpha}(G')<^0R_{\alpha}(G).$ 
\end{lemma}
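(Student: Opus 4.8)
The plan is to compute $^{0}R_{\alpha}(G')-{}^{0}R_{\alpha}(G)$ directly from the definition and then reduce it to a comparison of two values of the auxiliary function $f$ of Lemma~\ref{f}.

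First I would record how the degrees change under the switch $G'=G+uw-vw$. Only the three vertices $u,v,w$ are affected: $d_{G'}(u)=d_{G}(u)+1$, $d_{G'}(v)=d_{G}(v)-1$, and $d_{G'}(w)=d_{G}(w)$, since $w$ merely exchanges the neighbour $v$ for the neighbour $u$; every other vertex keeps its degree. Writing $a=d_{G}(u)$ and $b=d_{G}(v)$, it follows that
\[
{}^{0}R_{\alpha}(G')-{}^{0}R_{\alpha}(G)=\left[(a+1)^{\alpha}-a^{\alpha}\right]+\left[(b-1)^{\alpha}-b^{\alpha}\right].
\]

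Next I would rewrite the right-hand side through $f(x)=x^{\alpha}-(x+1)^{\alpha}$: since $(a+1)^{\alpha}-a^{\alpha}=-f(a)$ and $(b-1)^{\alpha}-b^{\alpha}=f(b-1)$, we obtain
\[
{}^{0}R_{\alpha}(G')-{}^{0}R_{\alpha}(G)=f(b-1)-f(a).
\]
Because $vw\in E(G)$ we have $b\geq 1$; in the situations where this lemma is applied $v$ is not pendant (a pendant $v$ would disconnect $G'$, and anyway $0^{\alpha}$ is undefined for $\alpha<0$), so $b\geq 2$ and hence $b-1\geq 1>0$, placing both $b-1$ and $a$ in the domain of $f$. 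The hypothesis $d(u)\geq d(v)$ yields $a\geq b>b-1$, i.e.\ $b-1<a$. Applying Lemma~\ref{f}: for $\alpha<0$ or $\alpha>1$ the function $f$ is strictly decreasing, so $f(b-1)>f(a)$ and therefore $^{0}R_{\alpha}(G')>{}^{0}R_{\alpha}(G)$; for $0<\alpha<1$ the function $f$ is strictly increasing, so $f(b-1)<f(a)$ and therefore $^{0}R_{\alpha}(G')<{}^{0}R_{\alpha}(G)$.

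I expect the only real obstacle to be the boundary case $d(v)=1$, which lies outside the domain of $f$; there one checks directly that $(a+1)^{\alpha}-a^{\alpha}-1$ has the required sign when $\alpha>0$ (true since $a\geq 1$), while for $\alpha<0$ the case does not occur because deleting the pendant edge $vw$ would disconnect $G$. Apart from this bookkeeping, the argument is essentially a one-line telescoping once the degree changes have been identified, so no serious difficulty is anticipated.
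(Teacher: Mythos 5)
Your proposal is correct and follows essentially the same route as the paper: compute the degree changes at $u$ and $v$, write the difference as $f(b-1)-f(a)$ with $f(x)=x^{\alpha}-(x+1)^{\alpha}$, and invoke the monotonicity of $f$ (Lemma~\ref{f}) together with $b-1<a$. Your extra attention to the boundary case $d(v)=1$ is a point the paper's proof silently skips, but it does not change the argument.
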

\begin{proof}
	Let $d(u)=x$ and $d(v)=y$. We obtain 
	$^0R_{\alpha}(G')-^0R_{\alpha}(G)=(x+1)^{\alpha}+(y-1)^{\alpha}-x^{\alpha}-y^{\alpha}=f(y-1)-f(x)$, where $f(x)=x^{\alpha}-(x+1)^{\alpha}$.
	$f(x)$ is a strictly decreasing function for $x>0$ and $\alpha<0$ or $\alpha >1$.
Since $y-1<x$ it follows that $^0R_{\alpha}(G') > ^0R_{\alpha}(G)$.  If $0<\alpha <1$ the proof is similar.
\end{proof}

\begin{lemma}\label{d2}
	Let $G\in T_k(n)$. If $^0R_{\alpha}(G)$ is maximum $($minimum$)$ then there exists a spanning subgraph $H$ of $G$ such that  $^0R_{\alpha}(G)\le  ^0R_{\alpha}(H)$
$( ^0R_{\alpha}(G)\ge \\ ^0R_{\alpha}(H))$
and  for any quasi vertex $z$ of G we have $d_{G}(z)\ge d_{H}(z)=2$ and $z$ is adjacent in $H$ to other two vertices which are not quasi vertices for $\alpha<0$ $(\alpha>0, respectively)$.
\end{lemma}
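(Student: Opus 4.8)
The plan is to treat the case $\alpha<0$ (maximum) in detail; the case $\alpha>0$ (minimum) is entirely analogous, with every inequality reversed. The first observation is that if $H$ is a spanning subgraph of $G$ then $d_H(v)\le d_G(v)$ for every vertex $v$, so repeated application of Lemma~\ref{max}, one deleted edge at a time, gives $^0R_{\alpha}(H)\ge{}^0R_{\alpha}(G)$ for $\alpha<0$. Hence it is enough to prescribe which edges of $G$ to delete so that, in the resulting $H$, every quasi vertex has degree exactly $2$ and both of its neighbours lie in $T:=G-V_k$.

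Fix a set $V_k$ of quasi vertices and put $R:=V(T)$, so that $G[R]=T$ is a tree on $n-k$ vertices. Assuming every quasi vertex has at least two neighbours in $R$, the subgraph $H$ is produced immediately: remove from $G$ every edge with both endpoints in $V_k$, and, for each $z\in V_k$, remove all but two of the edges from $z$ to $R$. In $H$ each quasi vertex then has degree $2$ with both neighbours in $R$ (hence non-quasi), $d_H(z)\le d_G(z)$ for quasi vertices, and $d_H(v)\le d_G(v)$ for all remaining vertices, so $^0R_{\alpha}(H)\ge{}^0R_{\alpha}(G)$, as required. Everything thus reduces to showing that in an extremal $G$ every quasi vertex has at least two neighbours in $R$.

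To this end I would first record that a $k$-generalized quasi tree admits no subset $S$ with $|S|\le k-1$ for which $G-S$ is a tree: otherwise, deleting leaves of $G-S$ one at a time, $S$ could be enlarged to a $(k-1)$-set with the same property, contradicting $G\in T_k(n)$. Now let $z$ be a quasi vertex. If $z$ had exactly one neighbour $r\in R$, then $G-(V_k\setminus\{z\})=G[R\cup\{z\}]$ would be $T$ with the pendant edge $zr$ attached, i.e.\ a tree on $n-k+1$ vertices, while $|V_k\setminus\{z\}|=k-1$ -- contradicting the previous sentence. This already forces every quasi vertex to have either at least two neighbours in $R$, or none at all.

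The main obstacle is therefore to exclude the last case, where some quasi vertex $z$ has all of its neighbours inside $V_k$ (so that, since $G$ is connected and $|R|\ge 2$, $z$ is joined to $R$ only through other quasi vertices). This cannot be ruled out for an arbitrary member of $T_k(n)$, so the extremality of $^0R_{\alpha}(G)$ must be invoked: the idea is to reroute the edges incident to such a $z$ -- transferring degree from a quasi neighbour of larger degree to one of smaller degree via Lemma~\ref{trans}, or re-attaching $z$ to $R$ while compensating elsewhere -- so as to stay inside $T_k(n)$ while strictly increasing $^0R_{\alpha}$ for $\alpha<0$, contradicting maximality. Verifying that each such rerouting keeps the graph a $k$-generalized quasi tree is the delicate point, and a small number of low-order quasi trees may need to be checked by hand.
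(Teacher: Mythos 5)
Your reductions are correct as far as they go: edge deletion can only increase ${}^0R_{\alpha}$ for $\alpha<0$ (Lemma~\ref{max}), so once every quasi vertex is known to have at least two neighbours in $R=V(G)-V_k$, the required $H$ is obtained exactly as you describe; and your observation that a quasi vertex cannot have \emph{exactly one} neighbour in $R$ (else $G-(V_k\setminus\{z\})$ would be a tree on a $(k-1)$-set) is a genuine and correct point. But the proof is not finished: the whole burden of the lemma now rests on the case you defer, a quasi vertex $z$ all of whose neighbours lie in $V_k$, and for that case you offer only a programme (``reroute edges via Lemma~\ref{trans} \ldots the delicate point \ldots may need to be checked by hand''), not an argument. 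This case is not vacuous: for instance, take a star with centre $c$ and leaves $l_1,\dots,l_p$ ($p\ge 3$), add a vertex $w$ adjacent to $c,l_1,l_2$ and a pendant vertex $z$ adjacent only to $w$; then $\{z,w\}$ is a legitimate quasi $2$-set in which $z$ has no neighbour in the tree part. So some additional input --- either the extremality of $G$, or the freedom to re-choose the quasi set $V_k$ (in the example $\{l_1,l_2\}$ works) --- is genuinely needed, and you neither carry out the exchange argument nor verify that the modified graphs stay in $T_k(n)$, nor that ${}^0R_{\alpha}$ strictly increases. Until that is done, the conclusion ``$d_H(z)=2$ with both neighbours non-quasi'' is not established.

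For comparison, the paper's own proof is much blunter: it asserts directly from the definition that every $z$ in the quasi set has $d_G(z)\ge 2$, counts edges ($m\ge n+k-1$, with equality iff all quasi degrees are $2$ and no two quasi vertices are adjacent), and then simply invokes Lemma~\ref{max} to delete edges down to $H$; it does not explicitly confront the possibility that a quasi vertex's neighbours lie inside $V_k$, which is precisely the subtlety you isolated. So your analysis is in fact more careful than the published argument on the points it covers, but identifying the difficulty is not the same as resolving it: as submitted, your proof has a gap exactly where the lemma needs content.
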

\begin{proof}
	By definition of a $k$-generalized quasi tree, there exists a subset $X\subset V(G)$ of cardinality $k$ such that $G-X$ is a tree and for any $Y\subset V(G)$ and $|Y|<k$, $G-Y$ is not a tree. It follows that $d(z)\ge 2$ for any vertex $z\in X$.  If $m$ denotes the number of edges of $G$, then $m\ge 2k+n-k-1=n+k-1$ and equality holds if and only if $d(z)=2$ for any vertex $z\in X$ and no two vertices in $X$ are adjacent.
By Lemma \ref{max}, by deleting some edges it follows the existence of the graph $H$, which is not necessarily in $T_k(n)$.  
\end{proof}

\begin{lemma}\label{aq}
	Let $n,x_i (1\leq i\leq n),p,m\ge 1$ be integers, $\alpha$ be any real number such that $\alpha\notin \{0,1\}$ and $x_1+x_2+\cdots+x_n=p$. \\
{\rm a)}The function $f(x_1,x_2,\ldots,x_n;p)=\sum_{i=1}^{n}x_i^{\alpha}$ is minimum for $\alpha<0$ or $\alpha >1$ $($maximum for $0<\alpha<1$, respectively$)$ if and only if $x_1,x_2,\ldots,x_n$ are almost equal, or $|x_i-x_j|\le1$ for every $i,j=1,2,\ldots,n$. \\
{\rm b)}If $x_1\geq x_2 \geq m$, the maximum of the function $f(x_1,\ldots ,x_n)$ is reached for $\alpha<0$ or $\alpha >1$ $($minimum for $0<\alpha<1$, respectively$)$ only for $x_1=p-m-n+2, x_2=m,x_3=x_4=\ldots =x_n=1$. The second maximum $($the second minimum, respectively$)$ is attained only for $x_1=p-m-n+1, x_2=m+1,x_3=x_4=\ldots =x_n=1$.
\end{lemma}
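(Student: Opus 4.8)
Everything follows from the strict convexity of $t\mapsto t^{\alpha}$ on $(0,\infty)$ for $\alpha<0$ or $\alpha>1$ (strict concavity for $0<\alpha<1$), used through \emph{unit transfers}: moving one unit from a coordinate of value $a\ge2$ to a coordinate of value $b$ leaves the sum $p$ fixed and changes $\sum_i x_i^{\alpha}$ by exactly $f(a-1)-f(b)$, where $f(x)=x^{\alpha}-(x+1)^{\alpha}$ is the function of Lemma~\ref{f}. Since $f$ is strictly decreasing for $\alpha<0$ or $\alpha>1$ (strictly increasing for $0<\alpha<1$), the sign of such a change is determined by comparing $a-1$ with $b$. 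I only treat $\alpha<0$ or $\alpha>1$; the case $0<\alpha<1$ is identical with all inequalities reversed.

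For part a), suppose $(x_1,\dots,x_n)$ is not almost equal, so there are $i,j$ with $x_i\ge x_j+2$; transferring a unit from $x_i$ to $x_j$ keeps all values $\ge1$, and since $x_i-1\ge x_j+1>x_j$ we get $f(x_i-1)<f(x_j)$, so $\sum_t x_t^{\alpha}$ strictly decreases. Hence any minimizer is almost equal, and the almost-equal multiset is uniquely determined by $n$ and $p$ ($r$ copies of $q+1$ and $n-r$ copies of $q$, where $p=qn+r$, $0\le r<n$), which gives the equivalence and the uniqueness.

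For part b), I may assume $x_1=\max_i x_i$ after a relabelling that preserves $x_1\ge x_2\ge m$ and all $x_i\ge1$. First, at a maximizer $x_3=\dots=x_n=1$: otherwise some $x_j\ge2$ with $j\ge3$, and transferring a unit from $x_j$ to $x_1$ is admissible and, since $x_j-1<x_1$, strictly increases the objective. With $x_3=\dots=x_n=1$ fixed we maximize $x_1^{\alpha}+x_2^{\alpha}$ under $x_1+x_2=p-n+2$, $m\le x_2\le x_1$; the transfer identity shows $x_2\mapsto(p-n+2-x_2)^{\alpha}+x_2^{\alpha}$ is strictly decreasing on $\{m,\dots,\lfloor(p-n+2)/2\rfloor\}$, so its top value is attained only at $x_2=m$ and its next value only at $x_2=m+1$. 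This identifies the unique maximizer as $(p-m-n+2,m,1,\dots,1)$.

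For the second maximum over all feasible configurations, I would induct on the excess $N=(x_2-m)+\sum_{j\ge3}(x_j-1)=p-m-n+2-\max_i x_i$, which vanishes exactly at the maximizer. When $N=1$ the only candidates are $C_A=(p-m-n+1,m+1,1,\dots,1)$ and $C_B=(p-m-n+1,m,2,1,\dots,1)$, and a direct computation gives that the objective at $C_A$ minus that at $C_B$ equals $(m+1)^{\alpha}-m^{\alpha}+1-2^{\alpha}=f(1)-f(m)\ge0$, with equality only at $m=1$, where $C_A$ and $C_B$ are the same multiset. When $N\ge2$, with $x_1=\max$, transferring a unit to $x_1$ from any coordinate of $\{x_2,\dots,x_n\}$ that is above its lower bound stays feasible, strictly increases the objective and drops $N$ by one; the new configuration is not the maximizer, so by the induction hypothesis its value is at most that of $C_A$, hence strictly so for the original. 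Therefore the second maximum is attained only at $C_A$. The main obstacle here is the bookkeeping in part b): making each swap and transfer respect the one-sided constraint $x_1\ge x_2\ge m$, and treating the $N=1$ case precisely enough that both the uniqueness of the second maximum and its degeneration at $m=1$ come out cleanly; one should also record the standing feasibility assumption (roughly $p\ge 2m+n$) that makes the claimed extremal tuples valid, which holds in the applications of this lemma.
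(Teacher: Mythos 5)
Your proof is correct and rests on exactly the same mechanism as the paper's: unit transfers whose effect on $\sum_i x_i^{\alpha}$ is $f(a-1)-f(b)$ with $f(x)=x^{\alpha}-(x+1)^{\alpha}$ from Lemma \ref{f}, strictly decreasing for $\alpha<0$ or $\alpha>1$ and strictly increasing for $0<\alpha<1$. The only difference is completeness: the paper's own proof records just the two transfer inequalities (the one for part a) and $(x+1)^{\alpha}+(y-1)^{\alpha}>x^{\alpha}+y^{\alpha}$ for $x\ge y\ge 2$) and leaves the identification of the extremal tuple, the entire second-maximum claim, and the tacit feasibility condition $p\ge 2m+n$ unargued, all of which your induction on the excess $N$ and the explicit $N=1$ comparison via $f(1)-f(m)$ supply in detail.
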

\begin{proof}
We shall consider only the case $\alpha <0$ or $\alpha >1$, the proof in the other case being similar.\\
	a) The function  $f(x)=x^{\alpha}-(1+x)^{\alpha}$ is a strictly decreasing function for $x>0$ and $\alpha>1$ or $\alpha <0$. If $x\geq y+2>0$ we deduce $x-1>y$, which implies $f(x-1)<f(y)$, or $x^{\alpha}+y^{\alpha}>(x-1)^{\alpha}+(y+1)^{\alpha}$.
	It follows that $f(x_1,x_2,\cdots,x_n;p)=\sum_{i=1}^{n}x_i^{\alpha}$ is minimum if and only if  $x_1,x_2,\ldots,x_n$ are almost equal.\\
b) If $x\geq y\geq 2$ then $x>y-1$, which implies $f(y-1)>f(x)$, or $(x+1)^{\alpha}+(y-1)^{\alpha}>x^{\alpha}+y^{\alpha}$.
\end{proof}

\section{Case $\alpha<0$}
\begin{thm}
	Let $G\in T_k(n)$, where $k\geq 1$ and $n\ge 3$. For $\alpha<0$ we have
\[^0R_{\alpha}(G)\ge k(n-1)^{\alpha}+2(k+1)^{\alpha}+(n-k-2)(k+2)^{\alpha}\]
and equality holds if and only if $G=K_k+P_{n-k}$.
\end{thm}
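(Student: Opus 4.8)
The plan is to first nail down the coarse structure of an extremal graph with Lemma~\ref{deg}, and then reduce the remaining problem to an optimization over tree degree sequences that is handled by Lemma~\ref{aq}(a); with those two inputs the argument is essentially bookkeeping.

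\medskip
\textbf{Step 1: structure of a minimizer.} Since $T_k(n)$ is finite, a graph $G\in T_k(n)$ attaining the minimum of $^0R_{\alpha}$ exists, and I would show it must be $K_k+P_{n-k}$. Fix a set $V_k$ of quasi vertices with $G-V_k$ a tree. Applying Lemma~\ref{deg} (whose hypothesis, that $^0R_{\alpha}$ be minimum, holds here) gives $d_G(z)=n-1$ for every $z\in V_k$; hence each quasi vertex is adjacent to all other vertices, so $V_k$ induces a $K_k$ joined to everything, and $T:=G-V_k$ is a tree on $m:=n-k$ vertices. Thus $G=K_k+T$. Note $m\ge 2$ whenever $T_k(n)\neq\emptyset$, since a $k$-generalized quasi tree needs at least $k+2$ vertices; for $n\le k+1$ the class is empty and the statement is vacuous.

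\medskip
\textbf{Step 2: reduction to a degree-sequence problem and conclusion.} In $G=K_k+T$ every vertex $v\in V(T)$ has $d_G(v)=d_T(v)+k$, so
\[^0R_{\alpha}(G)=k(n-1)^{\alpha}+\sum_{v\in V(T)}\bigl(d_T(v)+k\bigr)^{\alpha}.\]
The $m$ numbers $x_v:=d_T(v)+k$ are positive integers with $\sum_v x_v=\sum_v d_T(v)+mk=2(m-1)+mk=m(k+2)-2=:p$. By Lemma~\ref{aq}(a), for $\alpha<0$ the sum $\sum_v x_v^{\alpha}$ is minimized exactly when the $x_v$ are almost equal, and the unique almost-equal distribution of $p$ into $m$ parts is $(k+2)$ taken $m-2$ times together with $(k+1)$ taken twice. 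Subtracting $k$, this is the degree sequence $2^{\,m-2},1^2$, which (a tree with exactly two leaves being a path) is realized only by $T=P_m$. Hence the lower bound just obtained is attained inside $T_k(n)$ by $K_k+P_{n-k}$; computing its index as $k(n-1)^{\alpha}+2(k+1)^{\alpha}+(n-k-2)(k+2)^{\alpha}$ and checking that $K_k+P_{n-k}\in T_k(n)$, we conclude that the minimum over $T_k(n)$ equals this value, attained precisely when $\{d_T(v)\}=\{2^{\,m-2},1^2\}$, i.e. $T=P_{n-k}$, i.e. $G=K_k+P_{n-k}$. This gives both the inequality and the equality case.

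\medskip
\textbf{Expected difficulties.} The genuine content sits in the two lemmas invoked; the remaining care is in the chain of equivalences in Step~2 (almost-equal shifted sequence $\Leftrightarrow$ degree sequence $2^{m-2},1^2$ $\Leftrightarrow$ $T$ is a path), the small cases $m=2,3$ where $P_m$ has no internal vertex or only one, and the routine verification that $K_k+P_{n-k}$ really lies in $T_k(n)$ (when $n\ge k+2$, deleting any $k-1$ vertices leaves at least two mutually adjacent $K_k$-vertices and hence a triangle, unless all path vertices are deleted, in which case a $K_{\ge 3}$ remains). If one prefers to avoid Lemma~\ref{aq}(a), the same conclusion follows by repeatedly applying the edge-shift of Lemma~\ref{trans} inside $T$ — equivalently, a smoothing step $d_1\mapsto d_1-1$, $1\mapsto 2$ on the degree sequence using strict convexity of $x\mapsto(x+k)^{\alpha}$ for $\alpha<0$ — driving every $T$-degree down to at most $2$.
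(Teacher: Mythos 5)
Your proof is correct and follows essentially the same route as the paper's: Lemma \ref{deg} forces every quasi vertex to have degree $n-1$, so $G=K_k+T_{n-k}$, and Lemma \ref{aq}(a) applied to the shifted degrees $d_T(v)+k$ (summing to $(n-k)(k+2)-2$) forces the almost-equal sequence $2^{n-k-2},1^2$, i.e.\ $T_{n-k}=P_{n-k}$. The only differences are cosmetic: the paper additionally (and redundantly) notes via Lemma \ref{max} that $V_k$ induces a complete graph, while you add the routine checks of the small cases and of $K_k+P_{n-k}\in T_k(n)$.
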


\begin{proof}
	Suppose that $G\in T_k(n)$ has minimum $^0R_{\alpha}(G)$. Let $V_k\subset V(G)$ be the set of $k$-quasi vertices. As $^0R_{\alpha}(G+uv)<^0R_{\alpha}(G)$ for any $uv\notin E(G)$, this implies that $V_k$ forms a complete graph in $G$. Then by Lemma \ref{deg} we have $G=K_k+T_{n-k}$, where $T_{n-k}$ is a tree of order $n-k$.
We can write:
	\begin{align*}
	^0R_{\alpha}(G)&=^0R_{\alpha}(K_k+T_{n-k})\\
	&=\sum_{v\in V(K_k)}(d(v)+n-k)^{\alpha}+ \sum_{v\in V(T{n-k})}(d(v)+k)^{\alpha}\\
	&=k(n-1)^{\alpha}+ \sum_{v\in V(T_{n-k})}(d(v)+k)^{\alpha}.
	\end{align*}
We get $$\sum_{v\in V(T_{n-k})}(d(v)+k)=2(n-k-1)+k(n-k).$$ By Lemma \ref{aq},
 $\sum_{v\in V(T{n-k})}(d(v)+k)^{\alpha}$ is minimum if and only if the degrees of $T_{n-k}$ are almost equal. Since every tree has at least two vertices of degree one, it follows that the minimum of this sum is reached if and only if $T_{n-k}$ has two vertices of degree one and $n-k-2$ vertices of degree 2, or $T_{n-k}=P_{n-k}$. Finally,
	\[ ^0R_{\alpha}(G)  \ge k(n-1)^{\alpha}+2(k+1)^{\alpha}+(n-k-2)(k+2)^{\alpha}.\]
	Equality holds if and only if $G=K_k+P_{n-k}$.
\end{proof}

\begin{thm}
	Let $G\in T_k(n)$, where $n\ge 3$ and $k\geq 1$. If $\alpha<0$ we have:\\
{\rm a)} If $k=1$ then
	\[^0R_{\alpha}(G)\le (n-1)^{\alpha}+2^{\alpha+1}+n-3\]
	and equality holds if and only if $G={K_1}\bullet _{u,v}S_{n-1}$, where $u$ is the center of $S_{n-1}$ and $v$ is a pendant vertex of $S_{n-1}$.\\
{\rm b)} If $n\ge 4$ and $k\geq 2$ then \[^0R_{\alpha}(G)\le (n-2)^{\alpha}+k2^{\alpha}+(k+2)^{\alpha}+n-k-2\]
and equality holds if and only if $G=\overline{K_k}\bullet _{u,v}S_{n-k-2,2}(u,v)$, where $u$ and $v$ are vertices of degree $n-k-2$ and $2$ of $S_{n-k-2,2}(u,v)$, respectively.

\end{thm}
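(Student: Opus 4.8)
The plan is to reduce the maximization problem to a degree-sequence optimization by repeatedly applying the structural lemmas already established. For a graph $G\in T_k(n)$ with $\alpha<0$ achieving the maximum value of $^0R_\alpha$, Lemma~\ref{d2} guarantees a spanning subgraph $H$ with $^0R_\alpha(G)\le {^0R}_\alpha(H)$ in which every quasi vertex has degree exactly $2$ and is adjacent only to non-quasi vertices. Since $H$ has $n+k-1$ edges (the minimum possible, as noted in the proof of Lemma~\ref{d2}), contracting the $k$ paths of length two through the quasi vertices, or simply deleting the quasi vertices and accounting for the lost edges, shows that the non-quasi part of $H$ carries essentially a tree structure on $n-k$ vertices together with $k$ extra ``handles'' of length two. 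The first step, then, is to set up the sum $^0R_\alpha(H)=\sum_{z\in V_k} 2^\alpha + \sum_{w\notin V_k} d_H(w)^\alpha = k2^\alpha + \sum_{w\notin V_k} d_H(w)^\alpha$, and observe that the degrees $d_H(w)$ of the $n-k$ non-quasi vertices sum to $2(n-k-1)+2k = 2n-2$ (the tree edges contribute $2(n-k-1)$ and each handle contributes $2$), with each such vertex having degree at least $1$.

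The second step is the core optimization: among sequences $x_1\ge x_2\ge\cdots\ge x_{n-k}\ge 1$ with $\sum x_i = 2n-2$, where additionally at least two of the $x_i$ equal $1$ (every tree on $n-k\ge 2$ vertices has two leaves, and attaching handles only raises degrees), maximize $\sum x_i^\alpha$ for $\alpha<0$. By Lemma~\ref{aq}(b) with $p=2n-2$ and $m=2$, the maximum is attained only at $x_1 = p-m-(n-k)+2 = n-k$, $x_2 = 2$, $x_3=\cdots=x_{n-k}=1$; here $m=2$ is forced because $x_2\ge 2$ — one cannot make all but one vertex a leaf, since the handles and tree-connectivity force a second vertex of degree at least $2$ when $k\ge 1$. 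This pins down the non-quasi degree sequence as $[n-k,2,1^{n-k-2}]$, and then one must argue the realizing graph: the degree-$(n-k)$ vertex is the center $u$ adjacent to all other non-quasi vertices, the degree-$2$ vertex $v$ is one of these leaves (so $uv$ is an edge and $v$ has one more neighbor), and both $u,v$ must receive the $k$ handle-endpoints — i.e. each quasi vertex is joined to exactly $u$ and $v$. This is precisely $\overline{K_k}\bullet_{u,v} S_{n-k-2,2}(u,v)$ when $k\ge 2$ (so $n\ge k+2\ge 4$), and when $k=1$ the single quasi vertex joined to $u$ and $v$ gives $K_1\bullet_{u,v}S_{n-1}$ with the degree count $(n-1)^\alpha + 2\cdot 2^\alpha + (n-3)\cdot 1^\alpha$, matching part (a) after noting $2^{\alpha+1}=2\cdot 2^\alpha$.

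The third step is to verify that passing from $G$ to $H$ loses nothing, i.e. that the extremal $G$ itself equals the graph $H$ constructed above and is genuinely in $T_k(n)$ — one checks that $\overline{K_k}\bullet_{u,v}S_{n-k-2,2}(u,v)$ is indeed a $k$-generalized quasi tree (removing the $k$ handle-centers leaves the bistar $S_{n-k-2,2}$, a tree, while removing any $k-1$ vertices leaves a cycle through the remaining handle), and that Lemma~\ref{max} prevents any further edge additions from being in $T_k(n)$. One should also confirm the two cases $\alpha<0$ and the claimed formulas agree: in case (b) the degree of $u$ in the final graph is $(n-k-2)+1+k = n-1$? No — $u$ is adjacent to the $n-k-1$ non-quasi vertices other than itself and to the $k$ quasi vertices, but in $S_{n-k-2,2}$ the center has degree $n-k-3+1$; carefully, $u$ has degree $n-k-2$ inside the bistar's center-side plus $1$ for edge $uv$... this bookkeeping must be done so that $d(u)=n-2$ and $d(v)=k+2$ as the stated bound $(n-2)^\alpha + k2^\alpha + (k+2)^\alpha + (n-k-2)$ requires.

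The main obstacle I expect is the uniqueness argument in step two together with step three: Lemma~\ref{aq}(b) assumes the constraint ``$x_1\ge x_2\ge m$'' with $m$ fixed, but here we must first \emph{justify} that $m=2$ is the right value — that is, rule out the possibility that the optimum wants $x_2=1$ (all mass on one vertex), which is impossible for graphical/tree-structural reasons but needs a clean statement — and then, having fixed the degree sequence, show the realizing graph is unique up to isomorphism, in particular that both handle-endpoints must be the \emph{same} pair $\{u,v\}$ rather than spread among several leaves. This last point uses that spreading handles to distinct leaves would create more than two vertices of degree $\ge 2$ among the non-quasi vertices, contradicting the sequence $[n-k,2,1^{n-k-2}]$; making this rigorous while simultaneously threading the inequality $^0R_\alpha(G)\le {^0R}_\alpha(H)$ into an \emph{equality} characterization is the delicate part.
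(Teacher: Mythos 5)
Your step two is where the proof breaks down, and it is precisely the point you flag but do not resolve at the end. After splitting off $k2^{\alpha}$ and optimizing the $n-k$ non-quasi degrees subject only to $\sum x_i=2n-2$ and $x_1\ge x_2\ge 2$, Lemma \ref{aq}(b) with $p=2n-2$, $m=2$ gives the maximizer $x_1=(2n-2)-2-(n-k)+2=n+k-2$ (not $n-k$, as you computed), $x_2=2$, $x_3=\cdots=x_{n-k}=1$. For $k\ge 2$ this sequence is not realizable at all: no vertex can have degree $n+k-2>n-1$, and inside the ``tree plus $k$ degree-two handles'' structure the handle endpoints force the second-largest non-quasi degree to be at least $k+1$, so $x_2=2$ is impossible. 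Indeed your own candidate extremal graph has non-quasi degrees $[\,n-2,\;k+2,\;1^{n-k-2}\,]$, which is exactly the bookkeeping conflict you notice in step three ($d(u)=n-2$, $d(v)=k+2$) and leave open. Also, the justification ``at least two of the $x_i$ equal $1$ because attaching handles only raises degrees'' is backwards — raising degrees can destroy leaves; what is true and needed is only $x_2\ge 2$. Repairing the argument would require either descending through a chain of non-graphical ``maxima'' of Lemma \ref{aq}(b) with $m=2$, or redoing the optimization under the structural constraints, and with it the claimed uniqueness of the realizing graph; none of this is carried out, so the $k\ge 2$ case (part (b)) is not proved, and part (a) only comes out right because $n+k-2=n-k$ happens to coincide with $n-1$ when $k=1$.

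For comparison, the paper avoids this trap by fixing the attachment structure \emph{before} optimizing: using Lemmas \ref{trans} and \ref{d2} it passes to a graph $F=\overline{K_k}\bullet_{y_1,y_2}T_{n-k}$ in which all $k$ quasi vertices have degree $2$ and share the two neighbours $y_1,y_2$, chosen of maximum degree in $T_{n-k}$ and in $T_{n-k}-y_1$. It then maximizes $\sum_{v\ne y_1,y_2}d_T(v)^{\alpha}+(d_T(y_1)+k)^{\alpha}+(d_T(y_2)+k)^{\alpha}$, i.e. applies Lemma \ref{aq}(b) with the offsets built in (effectively $m=k+1$): the first maximizer is $T_{n-k}=S_{n-k}$ with $y_1$ the center and $y_2$ a pendant, which lies in $T_k(n)$ only for $k=1$ and yields part (a); for $k\ge 2$ one passes to the second maximizer of Lemma \ref{aq}(b), which is the bistar $S_{n-k-2,2}(u,v)$ and yields part (b). Your reduction to $m=2$ discards exactly the information (the two tree vertices carrying the $+k$ handle load) that makes the optimum land on a graph of $T_k(n)$ within one or two steps; keeping that information is the missing idea.
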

\begin{proof}
	Suppose that $G\in T_k(n)$ has maximum $^0R_{\alpha}(G)$. Let $V_k\subset V(G)$ be the set of $k$-quasi vertices. The graph $G-V_k$ is a tree of order $n-k$, denoted by $T_{n-k}$. As $^0R_{\alpha}(G-uv)>^0R_{\alpha}(G)$ for any $uv\in E(G)$, and
by Lemmas \ref{trans} and  \ref{d2} we deduce the existence of a graph $F$ with $V(F)=V(G)$, $ ^0R_{\alpha}(G)\le  ^0R_{\alpha}(F)$ and
 such that in $F$ we have: $V_k$ forms an empty graph, i.e., it induces $\overline{K_k}$, every quasi vertex of $G$ has degree 2 and quasi vertices have common neighbors $y_1,y_2\in V(G)$,
where $y_1$ is a vertex of maximum degree in $T_{n-k}$ and $y_2$ is a vertex of maximum degree in $T_{n-k}-y_1$.
We can represent the graph $F$ as $F=\overline{K_k}\bullet _{y_1,y_2} T_{n-k}$. We deduce:
	
	$$^0R_{\alpha}(F)=	^0R_{\alpha}(\overline{K_k}\bullet _{y_1,y_2}T_{n-k})=\sum_{v\in V(\overline{K_k}\bullet _{y_1,y_2}T_{n-k})}d(v)^{\alpha}$$
		$$=\sum_{v\in V(\overline{K_k})}d(v)^{\alpha}+ \sum_{v\in V(T{n-k})\hfill \atop v\ne y_1, v\ne y_2 } d(v)^{\alpha}+ (d(y_1)+k)^{\alpha} + (d(y_2)+k)^{\alpha}.$$
	We have $$ \sum_{v\in V(T{n-k})\hfill \atop v\ne y_1, v\ne y_2 } d(v)+ d(y_1)+k + d(y_2)+k=2n-2.$$
	By Lemma \ref{aq}, the sum
\begin{equation}
 \sum_{v\in V(T{n-k})\hfill \atop v\ne y_1, v\ne y_2 } d(v)^{\alpha}+ (d(y_1)+k)^{\alpha} + (d(y_2)+k)^{\alpha}
\end{equation}
is maximum only if $T_{n-k}=S_{n-k}$ and $y_1$ and $y_2$ are the center and a pendant vertex of $S_{n-k}$, respectively. For $k=1$ this graph is a $k$-generalized quasi-tree, but for $k\ge 2$ this property is no longer valid. We must consider the second maximum of (1). This time $F\in T_k(n), G=F$ and $T_{n-k}=S_{n-k-2,2}(u,v)$, $y_1=u$ and $y_2=v$.
The conclusion follows.
\end{proof}

\section{Case $\alpha \ge1$}

	\begin{thm}
		Let $G\in T_k(n)$, $k\geq 1$ and $n\ge 3$, then
		for $\alpha =1$
		\[2(n+k-1)\le ^0R_{\alpha}(G)\le 2n(k+1)-k(k+3)-2. \]
		Left equality holds if and only if $G$ consists of $\overline{K_k}$, a tree $T_{n-k}$ of order $n-k$, every vertex of $\overline{K_k}$ being adjacent to two arbitrary vertices of $T_{n-k}$ such that the resulting graph belongs to $T_k(n)$ and the right equality holds if and only if $G=K_k+T_{n-k}$.
	\end{thm}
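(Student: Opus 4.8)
The plan is to use the fact that the exponent $\alpha=1$ collapses the index to a purely combinatorial quantity: for every graph $G$ one has $^0R_1(G)=\sum_{v\in V(G)}d(v)=2|E(G)|$. So I would first reduce both halves of the statement to bounding the number of edges $m=|E(G)|$ of a $k$-generalized quasi tree of order $n$, and then read off the extremal values from the structure of the candidate graphs. This is also why $\alpha=1$ is isolated as a separate case from $\alpha>1$, where Lemmas \ref{trans}--\ref{aq} would instead be used.

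For the left inequality I would quote the edge bound already established inside the proof of Lemma \ref{d2}: if $G\in T_k(n)$ has set of quasi vertices $V_k$, then each $z\in V_k$ has $d(z)\ge 2$, the tree $G-V_k=T_{n-k}$ accounts for $n-k-1$ edges, and counting the edges meeting $V_k$ gives $m\ge 2k+(n-k-1)=n+k-1$, with equality exactly when every quasi vertex has degree $2$ and no two quasi vertices are adjacent. Hence $^0R_1(G)=2m\ge 2(n+k-1)$, and the equality case is precisely: $V_k$ induces $\overline{K_k}$, each of the $k$ quasi vertices is joined to exactly two vertices of $T_{n-k}$, and the resulting graph lies in $T_k(n)$ — which is the stated characterization.

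For the right inequality I would bound $m$ from above directly. The tree $G-V_k=T_{n-k}$ has $n-k-1$ edges; the $k$ quasi vertices span at most $\binom{k}{2}$ edges among themselves and are incident with at most $k(n-k)$ further edges, at most one to each non-quasi vertex. Adding these contributions,
\[
m\ \le\ (n-k-1)+\binom{k}{2}+k(n-k)\ =\ n(k+1)-\frac{k(k+3)}{2}-1,
\]
so $^0R_1(G)=2m\le 2n(k+1)-k(k+3)-2$. Equality forces every one of the listed edges to be present: $V_k$ induces $K_k$, every quasi vertex is adjacent to every non-quasi vertex, and the non-quasi vertices carry a spanning tree $T_{n-k}$, i.e. $G=K_k+T_{n-k}$. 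Conversely, for $n\ge k+2$ this graph lies in $T_k(n)$ — deleting the $k$ vertices of $K_k$ leaves the tree $T_{n-k}$, whereas after deleting any $k-1$ vertices at least one quasi vertex survives, is adjacent to all remaining vertices, and together with a surviving tree edge it creates a cycle — and it attains the bound.

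The point that needs care, in both directions, is not the arithmetic but the defining property of $k$-generalized quasi trees: $G-Y$ must fail to be a tree for every $(k-1)$-subset $Y$ of vertices. For the maximum this is a short case analysis according to how $Y$ meets $V_k$, as indicated above. For the minimum it is more delicate, since not every way of attaching each quasi vertex to two vertices of $T_{n-k}$ yields a member of $T_k(n)$ — for instance, attaching several quasi vertices to the endpoints of the same tree edge may leave a tree after the removal of one tree vertex — which is exactly why the statement carries the proviso ``such that the resulting graph belongs to $T_k(n)$''; once that proviso is granted, the left-equality claim is just the edge count $m=n+k-1$.
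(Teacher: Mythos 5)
Your proposal is correct and follows essentially the same route as the paper: reduce $^0R_1(G)=2|E(G)|$ and bound the edge count below by $n+k-1$ (each quasi vertex has degree at least $2$, as in Lemma \ref{d2}) and above by the edge count of $K_k+T_{n-k}$. Your version is simply more explicit than the paper's brief argument, spelling out the upper-bound arithmetic, the equality characterizations, and the verification that $K_k+T_{n-k}$ lies in $T_k(n)$.
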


\begin{proof}
	For $\alpha=1$ we have $^0R_{\alpha}(G)=\sum_{v\in V(G)}d(v)=2|E(G)|\ge 2(n+k-1)$ and equality holds if and only if the degree of every quasi vertex is two. Hence, the left hand inequality.\\
Similarly, $|E(G)|$ is maximum only if $G=K_k+T_{n-k}$ and the right hand inequality follows.

\end{proof}


\begin{thm}
	Let $G\in T_k(n)$ and $k\geq 1$, $n\ge 3$, $\alpha>1$ then
	\[(n-2k+2)2^{\alpha}+(2k-2)3^{\alpha}\le^0R_{\alpha}(G)\le (k+1)(n-1)^{\alpha}+(n-k-1)(k+1)^{\alpha}.\]
	The upper bound is an equality if and only if $G=K_k+S_{n-k}$.
\end{thm}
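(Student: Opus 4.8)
The plan is to prove the two bounds separately, the upper one by the method of Theorems 1 and 2 and the lower one from the edge count $|E(G)|\ge n+k-1$ together with the convexity estimate of Lemma \ref{aq}(a).

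\emph{Upper bound.} Let $G\in T_k(n)$ attain the maximum of $^0R_{\alpha}$, with quasi-vertex set $V_k$. Since $\alpha>0$, Lemma \ref{max} forces $V_k$ to induce a complete graph and Lemma \ref{deg} gives $d_G(z)=n-1$ for every $z\in V_k$, so $G=K_k+T_{n-k}$ for some tree $T_{n-k}$ on $n-k$ vertices; as each vertex of $T_{n-k}$ is joined to all $k$ quasi vertices,
\[
^0R_{\alpha}(G)=k(n-1)^{\alpha}+\sum_{v\in V(T_{n-k})}\bigl(d_{T_{n-k}}(v)+k\bigr)^{\alpha},
\]
and $\sum_v\bigl(d_{T_{n-k}}(v)+k\bigr)=2(n-k-1)+k(n-k)$ does not depend on $T_{n-k}$. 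It then remains to show that, for $\alpha>1$, $\sum_v\bigl(d(v)+k\bigr)^{\alpha}$ over trees on $n-k$ vertices is maximized only by $S_{n-k}$. Since $x\mapsto(x+k)^{\alpha}$ is strictly convex and increasing for $\alpha>1$ — equivalently, $f(x)=(x+k)^{\alpha}-(x+k+1)^{\alpha}$ is strictly decreasing by Lemma \ref{f} at $x+k$ — the edge-shift of Lemma \ref{trans} applies verbatim with $d(v)^{\alpha}$ replaced by $\bigl(d(v)+k\bigr)^{\alpha}$: fixing a vertex $y_1$ of maximum degree in a non-star tree and an edge $vw$ with $v,w\neq y_1$ and $w$ in the component of $T_{n-k}-vw$ avoiding $y_1$, one has $d(y_1)\ge d(v)$ and $y_1w\notin E$, so replacing $vw$ by $y_1w$ keeps a tree and strictly raises the sum; iterating ends at $S_{n-k}$. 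Substituting the degree sequence of $S_{n-k}$ (one vertex of degree $n-k-1$, the rest of degree $1$) yields $^0R_{\alpha}(G)=(k+1)(n-1)^{\alpha}+(n-k-1)(k+1)^{\alpha}$, equality holding only for $G=K_k+S_{n-k}$; and $K_k+S_{n-k}\in T_k(n)$, since deleting the clique leaves $S_{n-k}$ while deleting any $k-1$ vertices leaves a graph with a triangle.

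\emph{Lower bound.} Assuming, as in Lemma \ref{d2}, that every quasi vertex has degree $\ge2$ and hence at least two neighbours in the tree $G-V_k$, we get $|E(G)|\ge(n-k-1)+2k=n+k-1$, so $\sum_v d_G(v)=2|E(G)|\ge 2(n+k-1)$. Choosing integers $d_v'$ with $1\le d_v'\le d_G(v)$ and $\sum_v d_v'=2(n+k-1)$ gives $^0R_{\alpha}(G)=\sum_v d_G(v)^{\alpha}\ge\sum_v(d_v')^{\alpha}$, and by Lemma \ref{aq}(a) the last quantity is at least the value of the almost-equal integer vector of length $n$ and sum $2(n+k-1)$; when $n\ge 2k-2$ this vector is $[3^{2k-2},2^{n-2k+2}]$, so $^0R_{\alpha}(G)\ge(n-2k+2)2^{\alpha}+(2k-2)3^{\alpha}$. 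For $n\ge 3k$ the bound is attained by a chain of $k$ pairwise vertex-disjoint cycles (the $n$ vertices distributed among them, consecutive cycles linked by a single edge): it has $2k-2$ vertices of degree $3$ and the rest of degree $2$, and no $k-1$ vertices can meet all $k$ disjoint cycles, so it lies in $T_k(n)$.

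\emph{Main obstacle.} The lower bound is the delicate half. Its truth actually depends on the convention that quasi vertices have degree $\ge2$ (equivalently $|E(G)|\ge n+k-1$), without which graphs like a triangle with one pendant per vertex would sit in $T_2(n)$ with fewer edges and break the bound; and the closed form is the almost-equal-sequence value only for $n\ge2k-2$, while producing an extremal graph is awkward for $k+2\le n<3k$ — so I would restrict to $n\ge 2k-2$ (and $n\ge3k$ for attainment). The upper bound is routine once $G=K_k+T_{n-k}$ is in hand; the one point needing care is that the edge-shift of Lemma \ref{trans} never disconnects a non-star tree, which is secured by choosing $w$ on the side of the deleted edge away from $y_1$.
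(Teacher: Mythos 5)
Your proposal follows the paper's proof in all essentials. For the upper bound the paper likewise passes to $G=K_k+T_{n-k}$ via Lemmas \ref{max} and \ref{deg} and then maximizes $\sum_{v\in V(T_{n-k})}(d(v)+k)^{\alpha}$ with the total $2(n-k-1)+k(n-k)$ fixed; the only difference is that you re-derive the optimality of the star by iterating the edge shift of Lemma \ref{trans} inside the tree, where the paper simply invokes Lemma \ref{aq}, and you add the (omitted in the paper, but correct) check that $K_k+S_{n-k}\in T_k(n)$. For the lower bound the paper works with the spanning subgraph $H$ of Lemma \ref{d2}, whose degree sum is exactly $2(n+k-1)$, and then applies Lemma \ref{aq}(a); your variant (truncate the degree vector of $G$ itself to an integer vector with sum $2(n+k-1)$ and compare with the almost-equal vector) is the same computation in a slightly cleaner wrapper. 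One small slip: ``degree $\ge 2$ and hence at least two neighbours in the tree'' is not automatic, since a quasi vertex could have quasi neighbours; but this is exactly the bookkeeping looseness already present in Lemma \ref{d2}, whose inequality $m\ge n+k-1$ also tacitly assumes no edges inside $V_k$ are double counted.

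The caveats you raise are not excess caution; they expose genuine gaps in the paper's own argument. The claim in Lemma \ref{d2} that every quasi vertex has degree at least $2$ does not follow from the stated definition of $T_k(n)$: for the triangle with one pendant vertex attached to each of its three vertices, the only $2$-sets whose deletion leaves a tree are a triangle vertex together with its own pendant, so this graph belongs to $T_2(6)$ even though every admissible $V_2$ contains a vertex of degree $1$ and $m=6<n+k-1=7$; for $\alpha=2$ its index is $3\cdot 3^{2}+3\cdot 1=30$, which is below the claimed bound $(n-2k+2)2^{\alpha}+(2k-2)3^{\alpha}=34$. So the stated lower bound really does require your extra hypothesis (or a strengthened reading of the definition in which quasi vertices must have degree at least $2$). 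Likewise, identifying the almost-equal vector as $[3^{2k-2},2^{n-2k+2}]$ silently needs $n\ge 2k-2$, which the paper never states, and the attainment discussion (the figure) presupposes a realizing graph in $T_k(n)$, which your chain-of-cycles construction supplies for $n\ge 3k$. In short: your upper-bound proof is correct and essentially identical to the paper's; your lower-bound proof is as correct as the paper's and more honestly hedged, since the unconditional statement of the lower bound is in fact false under the paper's definition.
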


\begin{proof}
	Suppose that $G\in T_k(n)$ has maximum $^0R_{\alpha}(G)$. Let $V_k\subset V(G)$ be the set of $k$-quasi vertices. As $^0R_{\alpha}(G+uv)>^0R_{\alpha}(G)$ for any $uv\notin E(G)$, this implies that $V_k$ induces a complete subgraph in $G$. Then by Lemma \ref{deg} we have $G=K_k+T_{n-k}$, where $T_{n-k}$ is a tree of order $n-k$.
It follows that:
	\begin{align*}
	^0R_{\alpha}(G)&= {^0}R_{\alpha}(K_k+T_{n-k})\\
	&=\sum_{v\in V(K_k)}(d(v)+n-k)^{\alpha}+ \sum_{v\in V(T_{n-k})}(d(v)+k)^{\alpha}\\
	&=k(n-1)^{\alpha}+ \sum_{v\in V(T_{n-k})}(d(v)+k)^{\alpha}\\
	&\le  (k+1)(n-1)^{\alpha}+(n-k-1)(k+1)^{\alpha}.
	\end{align*}
	By Lemma \ref{aq} the upper bound is an equality if and only if $T_{n-k}=S_{n-k}$, i.e., $G=K_k+S_{n-k}$.
\par
Suppose now that $^0R_{\alpha}(G)$ is minimum. By Lemma \ref{d2} there exists a spanning subgraph $H$ of $G$ such that $ ^0R_{\alpha}(G)\ge  ^0R_{\alpha}(H)$ and every quasi vertex $z$ has $d_{H}(z)=2$,  being adjacent in $H$ to two vertices which are not quasi vertices, which implies that $\sum _{v\in V(G)}d_{H}(v)=2(n+k-1).$ By Lemma \ref{aq} $ ^0R_{\alpha}(H)$ is minimum if the degrees of $H$ are almost equal. We deduce that in this case the degrees of $H$ are equal to 2 or to 3. By denoting $n_i$ the number of vertices having degree $i$ we can write $2n_2+3(n-n_2)=2n+2k-2$, which implies $n_2=n-2k+2$ and $n_3=n-n_2=2k-2$ and yields the lower bound.
Consequently,  the minimum of $^0R_{\alpha}(G)$ is reached if and only if there exist $n-2k+2$ vertices (including quasi vertices) of degree $2$ and $2k-2$ vertices of degree $3$ (in this case $H=G$). Such a graph is illustrated in Fig. \ref{fig}. Note that for $k=1$ we have $n_2=n$ and $n_3=0$, hence $G=C_n$, the cycle with $n$ vertices.

\end{proof}

\begin{figure}
	\centering
	\includegraphics[width=0.9\linewidth]{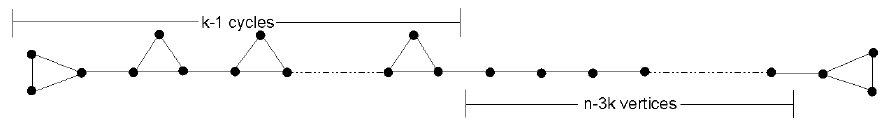}
	\caption{k-generalized quasi tree with almost equal vertices degree.}
	\label{fig}
\end{figure}

\section{Case $0<\alpha<1$}
By similar methods as in preceding sections we can deduce the extremal values of 
$^0R_{\alpha}(G)$ for $0<\alpha <1$ as follows:

\begin{thm}
Let $G\in T_k(n)$, $k\geq 1$ and $n\ge 3$. If $0<\alpha<1$ then
	\[  ^0R_{\alpha}(G)\le  k(n-1)^{\alpha}+2(k+1)^{\alpha}+(n-k-2)(k+2)^{\alpha}.\]
	Equality holds if and only if $G=K_k+P_{n-k}$.
\end{thm}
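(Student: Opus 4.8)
The plan is to mirror the argument used for Theorem~5 (the case $\alpha < 0$, maximum), exchanging the roles of ``maximum'' and ``minimum'' throughout, since Lemmas~\ref{max}, \ref{trans}, \ref{d2} and \ref{aq} all have clauses that flip direction for $0 < \alpha < 1$. First I would suppose $G \in T_k(n)$ attains the \emph{maximum} of $^0R_\alpha$ and let $V_k$ be its set of quasi vertices. By Lemma~\ref{max}, for $0<\alpha<1$ we have $^0R_\alpha(G-uv) > {^0R}_\alpha(G)$ for every edge $uv$; combined with Lemma~\ref{trans} (which for $0<\alpha<1$ says that shifting an edge towards a vertex of larger degree \emph{decreases} the index) and Lemma~\ref{d2} (whose parenthetical case gives, for $\alpha>0$, a spanning subgraph $H$ with $^0R_\alpha(G) \le {^0R}_\alpha(H)$ in which every quasi vertex has degree $2$ and its two neighbours are non-quasi vertices), I would produce a graph of the form $F = \overline{K_k}\bullet_{y_1,y_2} T_{n-k}$ with $^0R_\alpha(G) \le {^0R}_\alpha(F)$, where $T_{n-k} = G - V_k$ is a tree and $y_1, y_2$ are chosen to be vertices of largest and second-largest degree in $T_{n-k}$.

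Next I would write out $^0R_\alpha(F)$ explicitly as
\[
^0R_\alpha(F) = \sum_{v \in V(\overline{K_k})} d(v)^\alpha + \sum_{\substack{v \in V(T_{n-k}) \\ v \ne y_1,\, v \ne y_2}} d(v)^\alpha + (d(y_1)+k)^\alpha + (d(y_2)+k)^\alpha,
\]
noting that the first sum is $k \cdot k^\alpha$ is \emph{not} what appears; rather each quasi vertex has degree $2$ in $F$, so that term is $k\cdot 2^\alpha$, and that the relevant degree sum is $\sum_{v \ne y_1,y_2} d(v) + (d(y_1)+k) + (d(y_2)+k) = 2n-2$, exactly as in the proof of Theorem~5. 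Then I would invoke Lemma~\ref{aq}: for $0<\alpha<1$ the sum of $\alpha$-th powers with a fixed sum is \emph{maximized} when the arguments are ``almost equal'' (part~a), so among trees $T_{n-k}$ the maximum of the middle three terms is pushed towards a balanced degree sequence, which for a tree means the path $P_{n-k}$ with $y_1, y_2$ the two degree-one ends. Unlike the $\alpha<0$ case there is no obstruction from $k\ge 2$, because $\overline{K_k}\bullet_{y_1,y_2}P_{n-k}$ with $y_1,y_2$ the endpoints of the path is in fact $K_k + P_{n-k}$ when $\overline{K_k}$ is replaced appropriately — more precisely, attaching each of the $k$ quasi vertices to the two path-ends of $P_{n-k}$ recovers $K_k + P_{n-k}$ only if those $k$ vertices are also made mutually adjacent, so the cleaner route is to observe directly that $^0R_\alpha(F) \le {^0R}_\alpha(K_k + P_{n-k})$ by applying Lemma~\ref{max} in reverse (adding the missing edges among $V_k$ and from $V_k$ to all of $T_{n-k}$ increases the index for $\alpha>0$), and then compute $^0R_\alpha(K_k+P_{n-k})$ term by term.

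The final computation is routine: in $K_k + P_{n-k}$ each of the $k$ vertices of $K_k$ has degree $(k-1)+(n-k) = n-1$, the two end-vertices of $P_{n-k}$ have degree $1 + k = k+1$, and the remaining $n-k-2$ internal path vertices have degree $2 + k = k+2$, giving exactly
\[
^0R_\alpha(K_k+P_{n-k}) = k(n-1)^\alpha + 2(k+1)^\alpha + (n-k-2)(k+2)^\alpha,
\]
which is the claimed bound. For the equality discussion I would track when each inequality is tight: Lemma~\ref{max} forces $G$ itself (not merely a subgraph) to be the extremal configuration when no edge can be added without leaving $T_k(n)$, and Lemma~\ref{aq}(a)'s equality case forces the path; together these pin down $G = K_k + P_{n-k}$ as in the statement.

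The main obstacle I anticipate is bookkeeping the direction of every monotonicity statement — Lemmas~\ref{f}, \ref{max}, \ref{trans}, \ref{d2}, \ref{aq} each reverse for $0<\alpha<1$, so the risk is an inconsistency where one lemma is applied in the ``$\alpha<0$'' sense by mistake. A secondary subtlety is justifying that the passage from a general extremal $G$ to a graph of the form $\overline{K_k}\bullet_{y_1,y_2}T_{n-k}$ (via Lemmas~\ref{trans} and \ref{d2}) is legitimate even though the intermediate graph $F$ need not lie in $T_k(n)$; this is handled exactly as in Theorem~5, since at the very end the extremal graph $K_k+P_{n-k}$ does lie in $T_k(n)$ and dominates everything, so the chain $^0R_\alpha(G) \le {^0R}_\alpha(F) \le {^0R}_\alpha(K_k+P_{n-k})$ with $K_k+P_{n-k}\in T_k(n)$ closes the loop. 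Beyond that the proof is a direct transcription, which is presumably why the authors state only ``by similar methods.''
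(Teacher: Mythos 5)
You have mirrored the wrong theorem. The statement at hand (maximum of $^0R_\alpha$ for $0<\alpha<1$, extremal graph $K_k+P_{n-k}$) is the analogue of the paper's \emph{first} theorem (the $\alpha<0$ \emph{minimum}), not of the $\alpha<0$ maximum whose proof you transcribe. This matters because your reduction runs through inequalities that point the wrong way for $\alpha>0$. Lemma \ref{max} for $\alpha>0$ says adding an edge \emph{increases} the index, hence deleting an edge decreases it; your claim that $^0R_\alpha(G-uv)>{}^0R_\alpha(G)$ for $0<\alpha<1$ is backwards. Likewise, the parenthetical case of Lemma \ref{d2} for $\alpha>0$ applies when $^0R_\alpha(G)$ is \emph{minimum} and yields $^0R_\alpha(G)\ge{}^0R_\alpha(H)$; it does not give a sparse spanning subgraph $F=\overline{K_k}\bullet_{y_1,y_2}T_{n-k}$ with $^0R_\alpha(G)\le{}^0R_\alpha(F)$. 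In fact, since $F$ arises from $G$ by deleting edges and $\alpha>0$, one has $^0R_\alpha(F)\le{}^0R_\alpha(G)$, so the first link of your chain $^0R_\alpha(G)\le{}^0R_\alpha(F)\le{}^0R_\alpha(K_k+P_{n-k})$ fails, and the later ``cleaner route'' (adding edges to $F$) cannot repair it because it only bounds $^0R_\alpha(F)$, not $^0R_\alpha(G)$. The extremal graph itself is the warning sign: $K_k+P_{n-k}$ is dense, so the extremal configuration is reached by \emph{adding} edges, not by passing to a degree-$2$ skeleton.

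The intended argument (the one ``similar methods'' refers to) is the one from the $\alpha<0$ minimum: if $G$ maximizes $^0R_\alpha$ for $0<\alpha<1$, then by Lemma \ref{max} no edge can be missing at a quasi vertex, so $V_k$ induces $K_k$ and, by Lemma \ref{deg} (the $\alpha>0$ clause), every quasi vertex has degree $n-1$; hence $G=K_k+T_{n-k}$ with $T_{n-k}=G-V_k$. (Alternatively and even more directly: any $G\in T_k(n)$ is a spanning subgraph of $K_k+T_{n-k}$, so $^0R_\alpha(G)\le{}^0R_\alpha(K_k+T_{n-k})$.) Then
\[
^0R_\alpha(K_k+T_{n-k})=k(n-1)^{\alpha}+\sum_{v\in V(T_{n-k})}(d(v)+k)^{\alpha},
\qquad \sum_{v\in V(T_{n-k})}(d(v)+k)=2(n-k-1)+k(n-k),
\]
and Lemma \ref{aq}(a) for $0<\alpha<1$ says this sum of $\alpha$-th powers with fixed total is \emph{maximized} exactly when the arguments are almost equal; since a tree has at least two leaves, this forces $T_{n-k}=P_{n-k}$, giving the stated bound with equality if and only if $G=K_k+P_{n-k}$. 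Your final numerical evaluation of $^0R_\alpha(K_k+P_{n-k})$ is correct, but the structural reduction preceding it must be replaced as above.
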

\begin{thm}
Let $G\in T_k(n)$, where $n\ge 3$ and $k\geq 1$. If $0<\alpha<1$ we have:\\
{\rm a)} If $k=1$ then
	\[^0R_{\alpha}(G)\ge (n-1)^{\alpha}+2^{\alpha+1}+n-3\]
	and equality holds if and only if $G={K_1}\bullet _{u,v}S_{n-1}$, where $u$ is the center of $S_{n-1}$ and $v$ is a pendant vertex of $S_{n-1}$.\\
{\rm b)} If $n\ge 4$ and $k\geq 2$ then \[^0R_{\alpha}(G)\ge (n-2)^{\alpha}+k2^{\alpha}+(k+2)^{\alpha}+n-k-2\]
and equality holds if and only if $G=\overline{K_k}\bullet _{u,v}S_{n-k-2,2}(u,v)$, where $u$ and $v$ are vertices of degree $n-k-2$ and $2$ of $S_{n-k-2,2}(u,v)$, respectively.

\end{thm}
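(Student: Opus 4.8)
The plan is to run the argument used for the maximum of $^0R_{\alpha}$ in the case $\alpha<0$ (the second theorem of Section 3) with every inequality reversed, replacing each appeal to the ``$\alpha<0$ or $\alpha>1$'' branch of Lemmas~\ref{f}, \ref{trans} and \ref{aq} by the ``$0<\alpha<1$'' branch, and the ``maximum'' clause of Lemma~\ref{d2} by its ``minimum'' clause. Concretely, take $G\in T_k(n)$ realizing the minimum of $^0R_{\alpha}$, let $V_k$ be its set of quasi vertices and $T_{n-k}=G-V_k$. Since ${}^0R_{\alpha}(G-uv)<{}^0R_{\alpha}(G)$ for every edge $uv$ (Lemma~\ref{max} with $\alpha>0$) and ${}^0R_{\alpha}(G+uw-vw)<{}^0R_{\alpha}(G)$ whenever $d(u)\ge d(v)$ (Lemma~\ref{trans}, case $0<\alpha<1$), Lemmas~\ref{d2} and \ref{trans} produce a graph $F$ on $V(G)$ with ${}^0R_{\alpha}(F)\le{}^0R_{\alpha}(G)$ in which $V_k$ induces $\overline{K_k}$, every quasi vertex has degree $2$, and all quasi vertices share the same pair of non-quasi neighbours $y_1,y_2$, where $y_1$ has maximum degree in $T_{n-k}$ and $y_2$ has maximum degree in $T_{n-k}-y_1$. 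Hence $F=\overline{K_k}\bullet_{y_1,y_2}T_{n-k}$ and
\[
{}^0R_{\alpha}(F)=k\,2^{\alpha}+\sum_{\substack{v\in V(T_{n-k})\\ v\neq y_1,\,y_2}} d(v)^{\alpha}+(d(y_1)+k)^{\alpha}+(d(y_2)+k)^{\alpha},
\]
the $n-k$ displayed arguments summing to $2n-2$.

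Next I would invoke Lemma~\ref{aq} for these $n-k$ numbers: their two largest are $d(y_1)+k\ge d(y_2)+k\ge k+1$, so part b) applies with $m=k+1$. For $0<\alpha<1$ the above sum is minimized only when $d(y_1)+k=n-1$, $d(y_2)+k=k+1$ and all remaining arguments equal $1$, i.e.\ $T_{n-k}=S_{n-k}$ with $y_1$ its centre and $y_2$ a leaf; therefore ${}^0R_{\alpha}(G)\ge k\,2^{\alpha}+(n-1)^{\alpha}+(k+1)^{\alpha}+(n-k-2)$. For $k=1$ this value is attained by $F=K_1\bullet_{u,v}S_{n-1}\in T_1(n)$, and equality throughout the reductions forces $G=F$; rewriting $2\cdot2^{\alpha}=2^{\alpha+1}$ yields part a). For $k\ge2$, however, $\overline{K_k}\bullet_{u,v}S_{n-k}$ is not a $k$-generalized quasi tree --- deleting the leaf $v$ together with any $k-2$ quasi vertices leaves a star --- so this minimum is never attained in $T_k(n)$; one then moves to the second minimum of Lemma~\ref{aq} b), namely $d(y_1)+k=n-2$, $d(y_2)+k=k+2$ and remaining arguments $1$, which is the degree sequence of $S_{n-k-2,2}(u,v)$. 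The graph $\overline{K_k}\bullet_{u,v}S_{n-k-2,2}(u,v)$ does lie in $T_k(n)$ (a short check on which $k-1$ vertices one deletes: one either keeps a triangle $zuv$ or disconnects a pendant), it coincides with its own reduction $F$, and a direct count gives ${}^0R_{\alpha}(G)=(n-2)^{\alpha}+k\,2^{\alpha}+(k+2)^{\alpha}+n-k-2$, which is part b).

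The genuine obstacle --- the step ``pass to the second minimum'', dispatched rather briskly in the $\alpha<0$ proof --- is the case $k\ge2$: beyond checking that the first-minimum graph lies outside $T_k(n)$ and the second-minimum graph inside it, one must rule out that some other member of $T_k(n)$ with $k\ge2$ beats the second-minimum bound. Since ${}^0R_{\alpha}(G)\ge{}^0R_{\alpha}(F)$ only bounds $G$ from below by the quantity attached to the degree sequence of $T_{n-k}$, this reduces to showing that $G-V_k$ can never be the star $S_{n-k}$ when $G\in T_k(n)$ and $k\ge2$; equivalently, that a connected graph whose removal of $k$ quasi vertices leaves a star always admits a set of $k-1$ vertices whose removal is again a tree. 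I expect a short case analysis on how the quasi vertices attach to the centre and the leaves of the star --- each quasi vertex must have either $0$ or $\ge2$ neighbours in the star, as seen by deleting $V_k\setminus\{z\}$ --- to settle this, after which everything else is bookkeeping identical to Section~3.
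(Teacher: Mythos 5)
Your overall plan --- mirroring the proof of the maximum for $\alpha<0$ with the ``$0<\alpha<1$'' branches of Lemmas \ref{trans}, \ref{d2} and \ref{aq} --- is exactly what the paper intends (Section 5 gives no proof beyond ``by similar methods''), and part a) together with the bookkeeping for part b) is fine. The genuine gap lies in your proposed completion of the very step you single out. You reduce the case $k\ge 2$ to the claim that $G-V_k$ can never be the star $S_{n-k}$ when $G\in T_k(n)$, and you plan to prove this by a short case analysis; but the claim is false. Take $n=6$, $k=2$: let $c$ be the centre of a star with leaves $\ell_1,\ell_2,\ell_3$, and add $z_1$ adjacent to $c,\ell_1$ and $z_2$ adjacent to $c,\ell_2$. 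Then $G-\{z_1,z_2\}=S_4$, while deleting any single vertex leaves either a disconnected graph (deleting $c$ isolates $\ell_3$) or a graph containing one of the triangles $cz_1\ell_1$, $cz_2\ell_2$; hence $G\in T_2(6)$ and its tree part is a star. The construction generalizes (attach $z_i$ to $c$ and $\ell_i$ for $i=1,\dots,k$ whenever there are at least $k$ leaves), so the case analysis you expect cannot exist; note that your correct observation that each quasi vertex has $0$ or at least $2$ neighbours in the star is satisfied here.

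Consequently, for such a $G$ your chain $^0R_{\alpha}(G)\ge{^0R}_{\alpha}(F)$ followed by Lemma \ref{aq} only delivers the first-minimum value $k2^{\alpha}+(n-1)^{\alpha}+(k+1)^{\alpha}+(n-k-2)$, which is smaller than the bound of part b), so the claimed inequality is not established for these graphs. The theorem itself is not contradicted --- in the example one can check that $^0R_{\alpha}(G)=5^{\alpha}+4\cdot 2^{\alpha}+1$ exceeds $2\cdot 4^{\alpha}+2\cdot 2^{\alpha}+2$ for all $0<\alpha<1$ --- but your argument does not prove it. What is needed is a direct treatment of the residual case $T_{n-k}=S_{n-k}$: using the fact that every quasi vertex sends $0$ or at least $2$ edges into the star, one must show that the extra incidences force enough vertices of degree at least $2$ in $G$ itself so that $^0R_{\alpha}(G)$ still meets (and cannot attain) the second-minimum bound. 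The same brisk ``pass to the second extremum'' occurs in the paper's $\alpha<0$ proof, so the weakness you noticed is real; your proposed repair, however, does not close it.
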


\vspace{1.5cm}
{\bf References}

\end{document}